\newtheorem{theorem}{Theorem}[section]
\newtheorem{remark}[theorem]{Remark}
\newcommand{\hpg}[5]{{}_{#1}\mbox{\rm F}_{\!#2}\! \left(\left.{#3 \atop #4}\right| #5 \right)}
\begin{document}

\title{A family of Ramanujan-Orr formulas for $1/\pi$}
\author{Jesús Guillera} 
\address{Av. Ces\'{a}reo Alierta, 31 esc. iz. {\rm $4^o$}--A, Zaragoza (Spain)}
\email{jguillera@gmail.com}
\keywords{Hypergeometric series; Ramanujan-Orr type formulas; Complete elliptic integral; Legendre's relation; Zeilberger's algorithm.}
\subjclass[2010]{Primary 33E05, 33C20; Secondary 11F03, 33C75, 33F10.}

\maketitle

\begin{abstract}
We use a variant of Wan's method to prove two Ramanujan-Orr type formulas for $1/\pi$. This variant needs to know in advance the formulas for $1/\pi$ that we want to prove, but avoids the need of solving a system of equations.
\end{abstract}

\section{Introduction}
In \cite{Wan}, Wan gives a method to derive a formula for $1/\pi$ from each identity of type
\begin{equation}\label{G}
G(x)=\frac{1}{\pi^2} K(a(x))K(b(x)),
\end{equation}
where $G$ is analytic near the origin, and $K$ is the complete elliptic integral of the first kind. The method uses only transformations of $K$, see \cite{Vidunas} or \cite[Chap. 19-20]{Be3} for those of higher degree, and the following Legendre's identity:
\begin{equation}\label{legendre}
-K(\sqrt{r_0})K(\sqrt{1-r_0})+K(\sqrt{r_0})E(\sqrt{1-r_0})+ E(\sqrt{r_0})K(\sqrt{1-r_0})=\frac{\pi}{2},
\end{equation}
where $E$ is the complete elliptic integral of the second kind \cite{Bo}. The aim of this short paper is to show a variant of his method which we can use when we know in advance the formula for $1/\pi$ that we want to prove (or disprove). This variant  avoids the need of solving a system of equations, because we apply the method via translation \cite{Zu}. This is simpler and faster. For example, suppose that the formula
\begin{equation}\label{ope}
\left\{ p_0 + p_1  \left( x \frac{d}{dx} \right)+p_2 \left( x \frac{d}{dx} \right)^2 + p_3 \left( x \frac{d}{dx} \right)^3 \right\} G = \frac{1}{\pi},
\end{equation}
where $p_0$, $p_1$, $p_2$, $p_3$ are known constants, holds at $x=x_0$. The translation consist in applying the same operator to the right side of (\ref{G}). Then, if $a(x_0)$ and $b(x_0)$ are complementary arguments of $K$, the action of the operator provides a proof of the identity if we take into account the Legendre's identity. The following factorization: 
\begin{multline}\label{eq-4F3}
\sum_{n=0}^{\infty} \frac{\left(\frac{s}{2}\right)_n\left(\frac{1-s}{2}\right)_n\left(\frac{1+s}{2}\right)_n\left(1-\frac{s}{2}\right)_n}{\left(\frac12\right)_n(1)_n^3} y^n= \\
\sum_{n=0}^{\infty} \frac{(s)_n(1-s)_n}{(1)_n^2}\left( \frac{1-\sqrt{1-y}+\sqrt{-y}}{2} \right)^n \cdot 
\sum_{n=0}^{\infty} \frac{(s)_n(1-s)_n}{(1)_n^2}\left( \frac{1-\sqrt{1-y}-\sqrt{-y}}{2} \right)^n,
\end{multline}
(see \cite[eq. 7.5.1.9]{PrMaBr}, and \cite{chan-tanigawa} for a proof of it) is of type (\ref{G}) for the values $s=1/2$, $s=1/4$, $s=1/3$, $s=1/6$. Another Orr-type factorization was used in \cite[Sect. 4]{Wan}.

\section{Wan's method via translation}

In this section we prove two couples of Ramanujan-Orr type formulas applying Wan's method via translation. 

\subsection{Example 1}

Leting $s=1/4$ and making $y=-4x^2(x-1)^2/(2x-1)^2$ in (\ref{eq-4F3}), we have
\begin{multline}\label{eq-4F3-2}
\sum_{n=0}^{\infty} \frac{\left(\frac18\right)_n \left(\frac38\right)_n\left(\frac58 \right)_n\left(\frac78\right)_n}{\left(\frac12\right)_n(1)_n^3} \left( \frac{-4x^2(x-1)^2}{(2x-1)^2} \right)^n=  \\
\sum_{n=0}^{\infty} \frac{\left(\frac14\right)_n \left(\frac34\right)_n}{(1)_n^2}x^n \sum_{n=0}^{\infty} \frac{\left(\frac14\right)_n\left(\frac34\right)_n}{(1)_n^2}\left( \frac{x}{2x-1} \right)^n.
\end{multline}
Applying the quadratic transformation
\begin{equation}\label{cuad-cuartos-medios}
\sum_{n=0}^{\infty} \frac{\left(\frac14\right)_n \left(\frac34\right)_n}{(1)_n^2} x^n = 
\frac{1}{\sqrt{1 + \sqrt{x}}} 
\sum_{n=0}^{\infty}  \frac{ \left(\frac12\right)_n^2 }{(1)_n^2}  \left( \frac{ 2\sqrt{x} }{1+\sqrt{x} } \right)^n, \quad \sum_{n=0}^{\infty}  \frac{ \left(\frac12\right)_n^2 }{(1)_n^2} x^n = \frac{2}{\pi} K \left( \sqrt{x} \right),
\end{equation}
to the two sums in the right side of (\ref{eq-4F3-2}), we get
\begin{equation}\label{eq-4F3-3}
\sum_{n=0}^{\infty} \frac{\left(\frac18\right)_n \left(\frac38\right)_n\left(\frac58 \right)_n\left(\frac78\right)_n}{\left(\frac12\right)_n(1)_n^3} \left( \frac{-4x^2(x-1)^2}{(2x-1)^2} \right)^n=
f(x) f\left( \frac{x}{2x-1} \right),
\end{equation}
where
\[
f(x)=\frac{2}{\pi} \frac{K\left( \sqrt{\frac{2\sqrt{x}}{1+\sqrt{x}}}\right)}{\sqrt{1+\sqrt{x}}}.
\]
The arguments of the two elliptic integrals are complementary at 
\[
x_0=\frac{1}{49}+\frac{4}{49} \sqrt{3} \, i, \qquad
y_0 = \frac{-4x_0^2(x_0-1)^2}{(2x_0-1)^2} = \frac{192}{2401}, \qquad r_0 = \frac{2\sqrt{x_0}}{1+\sqrt{x_0}}=\frac12+\frac{\sqrt{3}}{6} \, i.
\]
This suggest that there exist formulas for $1/\pi$ with $y_0=192/2401$. Inspired by this, using the PSLQ algorithm, we have discovered the following ones:
\begin{equation}\label{eq-3}
\sum_{n=0}^{\infty} \frac{\left(\frac18\right)_n\left(\frac38\right)_n\left(\frac58\right)_n\left(\frac78\right)_n}{\left(\frac12\right)_n(1)_n^3} \left( \frac{192}{2401} \right)^n \frac{376n^2+216n+15}{2n+1}=\frac{98\sqrt{21}}{9\pi},
\end{equation}
and 
\begin{equation}\label{eq-4}
\sum_{n=0}^{\infty} \frac{\left(\frac18\right)_n\left(\frac38\right)_n\left(\frac58\right)_n\left(\frac78\right)_n}{\left(\frac12\right)_n(1)_n^3} \left( \frac{192}{2401} \right)^n (70688n^3-9216n^2+1428n+90)=\frac{294\sqrt{21}}{\pi}.
\end{equation}
Hence, we can use Wan's method \cite{Wan} to prove identity (\ref{eq-4}). However, due to the fact that we already know the formula that we want to prove (thanks to the PSLQ algorithm), we do not need constructing a system of equations. Instead, we will use the method via translation \cite{Zu}; see \cite{GuiZu} for another application of translation. 
\begin{theorem}
Formula (\ref{eq-4}) is true.
\end{theorem}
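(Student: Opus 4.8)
The plan is to read the left-hand side of (\ref{eq-4}) as a single differential operator applied to the generating function and evaluated at the special point, and then to transport that operator onto the elliptic-integral side supplied by (\ref{eq-4F3-3}). Writing $G(y)=\sum_{n\ge 0}a_n y^n$ with $a_n=(1/8)_n(3/8)_n(5/8)_n(7/8)_n/[(1/2)_n(1)_n^3]$ and $\theta=y\frac{d}{dy}$, so that $\theta\,y^n=n\,y^n$, I observe that $70668n^3-9216n^2+1428n+90$ is the value at $n$ of $P(\theta)=70668\,\theta^3-9216\,\theta^2+1428\,\theta+90$. Hence the left-hand side of (\ref{eq-4}) is precisely $P(\theta)\,G\big|_{y=y_0}$ with $y_0=192/2401$. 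The cubic degree is the expected one: being a product of two ${}_2F_1$'s, $G$ satisfies a third-order linear ODE, so $\theta^3G$ is the first term not reducible to lower order.

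Next I would substitute the factorization (\ref{eq-4F3-3}), namely $G(y)=f(x)\,f\!\left(\tfrac{x}{2x-1}\right)$ with $y=y(x)=-4x^2(x-1)^2/(2x-1)^2$, and push the operator through the change of variable via $\theta=\frac{y(x)}{y'(x)}\frac{d}{dx}$. The differentiation is mechanical once one uses the standard formulas $\frac{dK}{dk}=\frac{E(k)-(1-k^2)K(k)}{k(1-k^2)}$ and $\frac{dE}{dk}=\frac{E(k)-K(k)}{k}$, together with the chain rule through the moduli $a(x)=\sqrt{2\sqrt{x}/(1+\sqrt{x})}$ and the corresponding modulus $b(x)$ of the second factor $f(x/(2x-1))$. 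Since the pair $(K,E)$ is closed under $d/dk$ up to algebraic coefficients, the whole computation stays inside the four-dimensional space spanned by $K(a)K(b)$, $K(a)E(b)$, $E(a)K(b)$, $E(a)E(b)$. Thus $P(\theta)G$ takes the form
\[
c_1(x)\,K(a)K(b)+c_2(x)\,K(a)E(b)+c_3(x)\,E(a)K(b)+c_4(x)\,E(a)E(b),
\]
with explicit algebraic coefficients $c_i(x)$ carrying an overall factor $1/\pi^2$ from the two copies of $f$.

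I would then evaluate at $x=x_0=\tfrac{1}{49}+\tfrac{4}{49}\sqrt{3}\,i$, where $a(x_0)=\sqrt{r_0}$ and $b(x_0)=\sqrt{1-r_0}$ with $r_0=\tfrac12+\tfrac{\sqrt{3}}{6}i$, so that the two moduli are complementary and Legendre's relation (\ref{legendre}) becomes available. The crux is to check the two conditions that make the translation close up: that the coefficient $c_4(x_0)$ of $E(a)E(b)$ vanishes (Legendre has no $EE$ term), and that $(c_1,c_2,c_3)(x_0)$ is proportional to $(-1,1,1)$, so that the bracket collapses to a scalar multiple of $-K(\sqrt{r_0})K(\sqrt{1-r_0})+K(\sqrt{r_0})E(\sqrt{1-r_0})+E(\sqrt{r_0})K(\sqrt{1-r_0})=\tfrac{\pi}{2}$. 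These are exactly the linear constraints that Wan's original method would determine by solving a system; because the coefficients $70668,-9216,1428,90$ are already delivered by PSLQ, it suffices to verify them. The involution $x\mapsto x/(2x-1)$, which fixes $y$ and interchanges the two elliptic factors, sends $x_0$ to its complex conjugate and is what makes the resulting real number unambiguous. Once the two conditions hold, multiplying the surviving $\tfrac{\pi}{2}$ by the algebraic coefficient and the overall $1/\pi^2$ must reproduce $\frac{294\sqrt{21}}{\pi}$; concretely, the surviving algebraic constant has to equal $588\sqrt{21}$.

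The step I expect to be the main obstacle is the explicit differentiation in the previous two paragraphs: applying $\theta^3$ to the elliptic product through the two chain rules and the rational substitution $y(x)$ generates bulky algebraic coefficients, and then confirming both the vanishing of the $E(a)E(b)$ term and the proportionality $(c_1,c_2,c_3)(x_0)\propto(-1,1,1)$ at the complex algebraic point $x_0$ demands careful, essentially computer-assisted, simplification of nested radicals. Everything else---recognizing the operator, invoking the factorization, and applying Legendre's identity---is structural and should go through directly.
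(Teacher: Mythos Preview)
Your proposal is correct and follows essentially the same route as the paper: apply the operator $90+1428\,\theta-9216\,\theta^2+70668\,\theta^3$ with $\theta=y\,d/dy$ to both sides of (\ref{eq-4F3-3}), pass to the $x$-variable via $\theta=\tfrac{y}{y'}\tfrac{d}{dx}$ on the elliptic product, evaluate at $x_0$, and invoke Legendre's relation; the paper confirms precisely your predicted output $\tfrac{588\sqrt{21}}{\pi^2}\bigl(-KK'+KE'+EK'\bigr)$ and delegates the heavy simplification you flag as the obstacle to a short Maple procedure.
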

\begin{proof}
First, by applying the operator
\begin{equation}\label{ope-1}
\left. 90 + 1428  \left( y \frac{d}{dy} \right)-9216 \left( y \frac{d}{dy} \right)^2 +70688 \left( y \frac{d}{dy} \right)^3 \right|_{y=y_0},
\end{equation}
to the left side of (\ref{eq-4F3-3}), we obtain
\begin{equation}\label{series-orr}
\sum_{n=0}^{\infty} \frac{\left(\frac18\right)_n\left(\frac38\right)_n\left(\frac58\right)_n\left(\frac78\right)_n}{\left(\frac12\right)_n(1)_n^3} \left( \frac{192}{2401} \right)^n (70688n^3-9216n^2+1428n+90).
\end{equation}
Then, applying the same operator, but written as
\begin{equation}\label{ope-2}
\left. 90 + 1428  \left( \frac{y}{y'} \frac{d}{dx} \right)-9216 \left( \frac{y}{y'} \frac{d}{dx} \right)^2 +70688 \left( \frac{y}{y'} \frac{d}{dx} \right)^3 \right|_{x=x_0},
\end{equation}
to the right side of (\ref{eq-4F3-3}), we obtain 
\begin{equation}\label{result}
\frac{588\sqrt{21}}{\pi^2} \left(\frac{}{} \! \! -K(\sqrt{r_0})K(\sqrt{1-r_0})+K(\sqrt{r_0})E(\sqrt{1-r_0})+ E(\sqrt{r_0})K(\sqrt{1-r_0}) \right).
\end{equation}
This was done automatically using the following Maple code: 
\begin{verbatim}
proof:=proc()
local x0,y,w,f,g,pr1,pr,K;  K:=x->EllipticK(x);
y:=x->-4*x^2*(x-1)^2/(2*x-1)^2; w:=x->y(x)/diff(y(x),x); 
f:=x->2/Pi*K(sqrt(2*sqrt(x)/(1+sqrt(x))))/sqrt(1+sqrt(x));
x0:=1/49*(1+4*sqrt(3)*I); g:=x->f(x)*f(x/(2*x-1)); 
pr1:=subs(x=x0,90*g(x)+1428*w(x)*diff(g(x),x)
-9216*w(x)*diff(w(x)*diff(g(x),x),x)
+70688*w(x)*diff(w(x)*diff(w(x)*diff(g(x),x),x),x));
pr:=combine(simplify(combine(simplify(expand(
simplify(evalc(simplify(pr1))))),radical)),radical);    
return pr;
end:
\end{verbatim}
Copy and paste this code in a Maple session. Executing the procedure \texttt{proof();} gives the output (\ref{result}). Finally, taking into account the Legendre's identity (\ref{legendre}), formula (\ref{eq-4}) follows. 
\end{proof}

The following theorem implies that the formulas (\ref{eq-3}) and (\ref{eq-4}) are equivalent.

\begin{theorem}
The identity
\begin{align}
\sum_{n=0}^{\infty} B(n,s) z^n & \left[  \frac{s(s^2-1)(s-2)+4(1+2s-2s^2)n+8(1+s-s^2)n^2}{2n+1}  \right. \nonumber \\ \label{form-equiv} & \left. - \frac{8(1-z)}{z}n^3+12n^2 \right]=0,
\end{align}
where
\[
B(n,s)=\frac{\left(\frac{s}{2}\right)_n\left(\frac{1-s}{2}\right)_n\left(\frac{1+s}{2}\right)_n\left(1-\frac{s}{2}\right)_n}{\left(\frac12\right)_n(1)_n^3},
\]
holds.
\end{theorem}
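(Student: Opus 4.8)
The plan is to read the left-hand side as a single hypergeometric sum $\sum_{n\ge0}a_n$ with $a_n=B(n,s)\,z^n\,P(n)$, where $P(n)$ denotes the bracketed factor in (\ref{form-equiv}), and to prove it vanishes by indefinite (Gosper) summation. The key observation is that $a_n$ is a hypergeometric term in $n$ times a rational function of $n$, so the forward ratio $a_{n+1}/a_n$ is rational in $n$ with $s$ and $z$ as parameters; this is exactly the setting in which one hunts for a telescoping antidifference. Concretely, I would look for a rational certificate $R(n)\in\mathbb{Q}(s,z)(n)$ with
\[
a_n=G(n+1)-G(n),\qquad G(n)=B(n,s)\,z^n\,R(n),
\]
so that the sum collapses to its two endpoints.

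Dividing by $B(n,s)z^n$ turns this requirement into a purely rational identity in $n$. Using
\[
\frac{B(n+1,s)}{B(n,s)}=\frac{\left(\tfrac{s}{2}+n\right)\left(\tfrac{1-s}{2}+n\right)\left(\tfrac{1+s}{2}+n\right)\left(1-\tfrac{s}{2}+n\right)}{\left(\tfrac12+n\right)(1+n)^3},
\]
the condition reads
\[
P(n)=z\,\frac{\left(\tfrac{s}{2}+n\right)\left(\tfrac{1-s}{2}+n\right)\left(\tfrac{1+s}{2}+n\right)\left(1-\tfrac{s}{2}+n\right)}{\left(\tfrac12+n\right)(1+n)^3}\,R(n+1)-R(n).
\]
I would run Gosper's algorithm (the certificate form of Zeilberger's algorithm listed among the keywords) to produce $R$; the carefully chosen coefficients of $P$ are what make $a_n$ Gosper-summable. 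A leading-order check fixes the overall shape: since the term ratio tends to $z$ and $P(n)\sim-\tfrac{8(1-z)}{z}n^3$ as $n\to\infty$, the balance $(z-1)\cdot(\text{leading term of }R)=-\tfrac{8(1-z)}{z}n^3$ forces $R(n)\sim\tfrac{8}{z}n^3$, while the simple pole of $P$ at $n=-\tfrac12$ must be reproduced through the factor $\tfrac12+n$ in the denominator of the term ratio; Gosper's algorithm organises exactly these constraints and returns the admissible denominator of $R$. Clearing denominators then reduces the displayed equation to a polynomial identity in $n$, checked by comparing coefficients with $s$ and $z$ treated as free parameters.

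With $R$ in hand, the summation is immediate: telescoping gives $\sum_{n=0}^{N}a_n=G(N+1)-G(0)$. For the upper endpoint, the Pochhammer asymptotics give $B(n,s)\sim C\,n^{-3/2}$ (the four upper parameters of $B$ sum to $2$ and the lower ones to $\tfrac72$, yielding the exponent $2-\tfrac72=-\tfrac32$), so for $|z|<1$ one has $G(N+1)=B(N+1,s)z^{N+1}R(N+1)\to0$ because $R$ grows only polynomially. The lower endpoint is $G(0)=B(0,s)R(0)=R(0)$, and one must verify that $R(0)=0$; this vanishing of the constant term is precisely what makes the full sum equal to $0$. Hence the identity holds for all $s$ and all $z$ with $|z|<1$. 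Specialising to $s=1/4$ and $z=192/2401$ and rescaling then matches the bracket of (\ref{eq-4}) with a constant multiple of the rational summand of (\ref{eq-3}) modulo this Gosper-summable term, which is the equivalence announced before the statement.

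The step I expect to be the main obstacle is the construction and certification of $R$. Because three indeterminates $n,s,z$ are present and $P(n)$ genuinely mixes a rational part, the $1/(2n+1)$ term, with polynomial parts of degree up to three, the polynomial identity obtained after clearing denominators is sizeable; it is a finite but heavy symbolic computation, best delegated to a computer algebra system, exactly as Gosper's and Zeilberger's algorithms are designed for. A secondary but essential point is the rigorous justification of the $N\to\infty$ passage together with the verification $R(0)=0$, since these are what guarantee that both telescoping endpoints vanish rather than merely cancel against a nonzero residual constant.
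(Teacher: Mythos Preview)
Your proposal is correct and essentially coincides with the paper's own proof: the paper calls Maple's \texttt{Zeilberger} routine with $n$ as the summation variable, obtains the trivial operator \texttt{op=1} (i.e.\ the summand is Gosper-summable in $n$), and notes that the certificate satisfies $w(0,s)=0$, so the sum telescopes to $0$. Your write-up is in fact more careful about the two boundary contributions (the vanishing of $G(N{+}1)$ for $|z|<1$ and of $G(0)=R(0)$), which the paper leaves implicit.
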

\begin{proof}
We can prove it automatically by computer \cite{PWZ} writing in a Maple session
\begin{verbatim}
with(SumTools[Hypergeometric]):
print(op=Zeilberger(v(n,s),s,n,S)[1]):
w:=(n,s)->subs({p=n,q=s},Zeilberger(v(p,q),q,p,S)[2]):
print(w0=w(0,s));
\end{verbatim}
where \texttt{v(n,s)} is the function inside the sum of (\ref{form-equiv}), and \texttt{S} is the operator such that \texttt{S v(n,s) = v(n,s+1)}. The output \texttt{op=1} (independent of \texttt{S}), means that
\[ 
\texttt{1 v(n,s)=v(n,s)=w(n+1,s)-w(n,s)}.
\] 
Then, as $\texttt{w(0,s)=0}$, identity (\ref{form-equiv}) is due to telescoping cancelation when we sum for ${\texttt n \geq 0}$.
\end{proof}

\begin{remark} \rm
It is possible to check the proof of (\ref{eq-3}) and (\ref{eq-4}) in a completely automatic way because a computer can verify that:
\begin{enumerate}
\item Both sides of (\ref{eq-4F3-3}) satisfy the same differential equation.
\item The action of the operator (\ref{ope-1}) to the left side of (\ref{eq-4F3-3}) gives (\ref{series-orr}). This is trivial, even for human.
\item The action of the operator (\ref{ope-2}) to the right side of (\ref{eq-4F3-3}) gives (\ref{result}). See the procedure \texttt{proof()}.
\item Identity (\ref{form-equiv}) is true (see the Maple code above).
\end{enumerate}
Observe that these steps are elementary in the sense that they do not require the use of modular functions. Observe in addition that the value of $\tau$, such that
\[
\frac{K(\sqrt{1-r_0})}{K(\sqrt{r_0})}=-2 i \tau, \qquad q=e^{2\pi i \tau},
\]
corresponds to the value of $r_0$ of the above proof, is not a quadratic irrational (in fact it seems to be transcendent).
\end{remark}

\subsection{Example 2}
Letting $s=1/4$ and replacing $x$ with $x/(x-1)$ in  (\ref{eq-4F3-2}), we have
\begin{multline}\label{orr-ex-2}
\sum_{n=0}^{\infty} \frac{\left(\frac18\right)_n \left(\frac38\right)_n\left(\frac58 \right)_n\left(\frac78\right)_n}{\left(\frac12\right)_n(1)_n^3} \left( \frac{-4x^2}{(x^2-1)^2} \right)^n= \\ \sum_{n=0}^{\infty} \frac{\left(\frac14\right)_n\left(\frac34\right)_n}{(1)_n^2}\left( \frac{x}{x-1} \right)^n \cdot 
\sum_{n=0}^{\infty} \frac{\left(\frac14\right)_n\left(\frac34\right)_n}{(1)_n^2}\left( \frac{x}{x+1} \right)^n,
\end{multline}
Then, using the known transformation
\[
\sum_{n=0}^{\infty} \frac{(\frac14)_n(\frac34)_n}{(1)_n^2} \left(\frac{x}{x-1}\right)^n=\sqrt[4]{1-x} \sum_{n=0}^{\infty} \frac{(\frac12)_n^2}{(1)_n^2} \left(\frac12 -\frac12\sqrt{1-x} \right)^n, 
\]
and the quadratic transformations (\ref{cuad-cuartos-medios})
and
\[
K(x)=\frac{1}{1+x} K \left( \frac{2\sqrt{x}}{1+x} \right),
\]
we deduce that
\begin{multline}\label{transf-ex-2}
\sum_{n=0}^{\infty} \frac{\left(\frac18\right)_n \left(\frac38\right)_n\left(\frac58 \right)_n\left(\frac78\right)_n}{\left(\frac12\right)_n(1)_n^3} \left( \frac{-4x^2}{(x^2-1)^2} \right)^n= \\ \frac{4}{\pi^2} \frac{\sqrt[4]{1-x}}{(1+g(x))\sqrt{1+h(x)}}K\left( \frac{2\sqrt{g(x)}}{1+g(x)}\right)K\left( \sqrt{\frac{2h(x)}{1+h(x)}} \right),
\end{multline}
where 
\[
g(x)=\sqrt{\frac12-\frac12\sqrt{1-x}}, \qquad h(x)=\sqrt {\frac{x}{x+1}}.
\]
The arguments of the elliptic $K$ integrals in (\ref{transf-ex-2}) are complementary at
\[
x_0=\frac{85\sqrt{41}-529}{128}, \quad y_0=-\frac{2^{14}}{23^4}=-\left(\frac{128}{529}\right)^2, \quad r_0=\frac{2h(x_0)}{1+h(x_0)} = \frac{33-5\sqrt{41}}{2},
\]
where $y(x)=-4x^2/(x^2-1)^2$. This suggest that there exist formulas for $1/\pi$ with $y_0=-2^{14} \cdot 23^{-4}$. With the help of the PSLQ algorithm we have found 
\begin{equation}\label{for1-ex-2}
\sum_{n=0}^{\infty} \frac{\left(\frac18\right)_n \left(\frac38\right)_n\left(\frac58 \right)_n\left(\frac78\right)_n}{\left(\frac12\right)_n(1)_n^3} \left( \frac{-2^{14}}{23^4} \right)^n \frac{6970n^2+4037n+280}{529(2n+1)}=\frac{\sqrt{23}}{3\pi},
\end{equation}
and the contiguous formula
\begin{equation}\label{for2-ex-2}
\sum_{n=0}^{\infty} \frac{\left(\frac18\right)_n \left(\frac38\right)_n\left(\frac58 \right)_n\left(\frac78\right)_n}{\left(\frac12\right)_n(1)_n^3} \left(  \frac{-2^{14}}{23^4} \right)^n \frac{-296225n^3-24576n^2+53002n+4200}{3174}=\frac{\sqrt{23}}{\pi}.
\end{equation}

\begin{theorem}
Formulas (\ref{for1-ex-2}) and  (\ref{for2-ex-2}) are true.
\end{theorem}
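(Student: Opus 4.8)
The plan is to mirror the proof of (\ref{eq-4}) from Example~1 almost verbatim: first prove the cubic formula (\ref{for2-ex-2}) by Wan's method via translation applied to the factorization (\ref{transf-ex-2}), and then recover the quadratic formula (\ref{for1-ex-2}) for free from the telescoping identity (\ref{form-equiv}). The cubic form is the one that arises naturally, because a polynomial operator in $y\frac{d}{dy}$ acts on the series side by $\left(y\frac{d}{dy}\right)^k y^n = n^k y^n$, hence reproduces a polynomial-in-$n$ coefficient rather than one divided by $2n+1$.

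Concretely, I would apply the operator
\[
\frac{1}{3174}\left\{4200 + 53002\left(y\frac{d}{dy}\right) - 24576\left(y\frac{d}{dy}\right)^2 - 296225\left(y\frac{d}{dy}\right)^3\right\}\Big|_{y=y_0},\qquad y_0=-\frac{2^{14}}{23^4},
\]
to both sides of (\ref{transf-ex-2}). On the left-hand series the action is immediate and returns exactly the series of (\ref{for2-ex-2}). On the right-hand side I would rewrite each $y\frac{d}{dy}$ as $\frac{y}{y'}\frac{d}{dx}$, with $y(x)=-4x^2/(x^2-1)^2$, apply the resulting third-order differential operator to the product
\[
\frac{4}{\pi^2}\,\frac{\sqrt[4]{1-x}}{(1+g(x))\sqrt{1+h(x)}}\,K\!\left(\frac{2\sqrt{g(x)}}{1+g(x)}\right)K\!\left(\sqrt{\frac{2h(x)}{1+h(x)}}\right),
\]
and specialize at $x_0=(85\sqrt{41}-529)/128$. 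Because each differentiation of a $K$ produces an $E$, the result is a combination of the four products $KK$, $KE$, $EK$, $EE$ in the two moduli $\sqrt{r_0}$ and $\sqrt{1-r_0}$, with $r_0=(33-5\sqrt{41})/2$. As in Example~1, I expect this to simplify to $\frac{2\sqrt{23}}{\pi^2}$ times the Legendre combination appearing in (\ref{result}); invoking (\ref{legendre}) then yields $\frac{2\sqrt{23}}{\pi^2}\cdot\frac{\pi}{2}=\frac{\sqrt{23}}{\pi}$, the right-hand side of (\ref{for2-ex-2}). In practice I would run the obvious analogue of the \texttt{proof()} procedure, with the data of this example substituted for \texttt{y}, \texttt{x0} and the $K$-product, relying on \texttt{evalc} together with nested \texttt{simplify}/\texttt{combine} at \texttt{radical} to carry out the reduction.

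The main obstacle is precisely this last symbolic simplification, which is heavier here than in Example~1 for three reasons: the extra algebraic prefactor $\sqrt[4]{1-x}/((1+g)\sqrt{1+h})$ generates many cross terms under the triple product rule; the moduli are built from the nested radicals $g(x)=\sqrt{\tfrac12-\tfrac12\sqrt{1-x}}$ and $h(x)=\sqrt{x/(x+1)}$; and $x_0$ is itself a quadratic irrational over $\mathbb{Q}(\sqrt{41})$, so that evaluating the coefficients of $KK$, $KE$, $EK$, $EE$ at $x_0$ forces the system to untangle several layers of surds. The substantive point buried in the computation is that the $EE$ coefficient must vanish and the other three must reassemble, with the correct signs, into the Legendre form; this is guaranteed in principle by the complementarity of the moduli at $x_0$ and by the fact that a product of two solutions of the second-order hypergeometric equation satisfies a third-order ODE, but it has to be confirmed by the computer algebra system.

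Finally, with (\ref{for2-ex-2}) in hand, I would obtain (\ref{for1-ex-2}) from (\ref{form-equiv}) exactly as (\ref{eq-3}) was obtained from (\ref{eq-4}). Writing $M_k=\sum_n B(n,\tfrac14)y_0^n n^k$ and $N_0=\sum_n B(n,\tfrac14)y_0^n/(2n+1)$, the elementary partial-fraction reductions $\frac{n}{2n+1}=\frac12-\frac{1}{2(2n+1)}$ and $\frac{n^2}{2n+1}=\frac{n}{2}-\frac14+\frac{1}{4(2n+1)}$ express the quadratic-over-$(2n+1)$ sum of (\ref{for1-ex-2}) in terms of $M_0$, $M_1$ and $N_0$, while (\ref{form-equiv}) specialized to $s=\tfrac14$, $z=y_0$ pins down $N_0$ as a linear combination of $M_0,M_1,M_2,M_3$. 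Substituting converts (\ref{for1-ex-2}) into a pure cubic-moment identity which, after clearing the denominators $529=23^2$ and $3174=6\cdot 23^2$, coincides with (\ref{for2-ex-2}) up to an overall constant (reflecting the factor $3$ between $\sqrt{23}/(3\pi)$ and $\sqrt{23}/\pi$). Since every manipulation is reversible linear algebra, (\ref{for2-ex-2}) implies (\ref{for1-ex-2}), completing the proof.
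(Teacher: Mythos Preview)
Your proposal is correct and follows essentially the same route as the paper: apply the cubic differential operator in $y\,d/dy$ (rewritten via $x$) to both sides of (\ref{transf-ex-2}), let Maple reduce the right-hand side to the Legendre combination at the complementary moduli, invoke (\ref{legendre}) to obtain (\ref{for2-ex-2}), and then pass to (\ref{for1-ex-2}) through the telescoping identity (\ref{form-equiv}). The paper's proof is terser---it simply presents the adapted \texttt{proof()} procedure and states that executing it plus Legendre gives (\ref{for2-ex-2}), with (\ref{for1-ex-2}) following from (\ref{form-equiv})---while you spell out the expected $KK$, $KE$, $EK$, $EE$ structure and the moment bookkeeping, but the underlying argument is the same.
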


\begin{proof}
First, we apply the operator
\begin{equation}\label{ex2-ope-1}
\left. 4200 + 53002  \left( y \frac{d}{dy} \right)-24576 \left( y \frac{d}{dy} \right)^2 -296225 \left( y \frac{d}{dy} \right)^3 \right|_{y=y_0},
\end{equation}
to the left side of  (\ref{transf-ex-2}). Then, we use the Maple code
\begin{verbatim}
proof:=proc()
local x0,y,w,g,h,f,pr1,pr,prr,K; K:=x->EllipticK(x);
y:=x->-4*x^2/(x^2-1)^2; w:=x->y(x)/diff(y(x),x); 
x0:=-529/128+85/128*sqrt(41);
g:=x->sqrt(1/2-1/2*sqrt(1-x)); h:=x->sqrt(x/(x+1));
f:=x->4/Pi^2*(1-x)^(1/4)*1/((1+g(x))*sqrt(1+h(x)))
*K(2*sqrt(g(x))/(1+g(x)))*K(sqrt(2*h(x)/(1+h(x))));
pr1:=subs(x=x0,4200*f(x)+53002*w(x)*diff(f(x),x)
-24576*w(x)*diff(w(x)*diff(f(x),x),x)
-296225*w(x)*diff(w(x)*diff(w(x)*diff(f(x),x),x),x));
pr:=simplify(combine(simplify(expand(simplify(evalc(
simplify(pr1))))),radical));
prr:=simplify(radnormal(combine(radnormal(pr),radical))); 
return prr;
end:
\end{verbatim}
to apply the same operator, but written as
\begin{equation}\label{ex2-ope-2}
\left. 4200 + 53002  \left( \frac{y}{y'} \frac{d}{dx} \right)-24576 \left( \frac{y}{y'} \frac{d}{dx} \right)^2 -296225 \left( \frac{y}{y'} \frac{d}{dx} \right)^3 \right|_{x=x_0},
\end{equation}
to the right side of (\ref{transf-ex-2}). Executing the procedure and using the Legendre's identity, we prove  (\ref{for2-ex-2}), and from it we deduce (\ref{for1-ex-2}) by using the identity (\ref{form-equiv}).
\end{proof}

\section{Ramanujan-Orr type formulas and the PSLQ algorithm}

By the PSLQ algorithm we have discovered the formulas
\begin{equation}\label{eq-1}
\sum_{n=0}^{\infty} \frac{\left(\frac18\right)_n\left(\frac38\right)_n\left(\frac58\right)_n\left(\frac78\right)_n}{\left(\frac12\right)_n(1)_n^3} \frac{1}{7^{4n}}\frac{1920n^2+1072n+55}{2n+1}=\frac{196\sqrt{7}}{3\pi},
\end{equation}
and 
\begin{equation}\label{eq-2}
\sum_{n=0}^{\infty} \frac{\left(\frac13 \right)_n \left( \frac23 \right)_n \left( \frac16 \right)_n \left( \frac56 \right)_n}{\left(\frac12 \right)_n(1)_n^3} \left( \frac35 \right)^{6n} \frac{133n^2+79n+6}{2n+1}=\frac{625}{32\pi}.
\end{equation}
Hence, we believe that it should be possible to prove (\ref{eq-1}) and (\ref{eq-2}) from (\ref{eq-4F3}), with $s=1/4$ and $s=1/3$ respectively. In addition we find curious that these formulas look similar to the following challenging series for $1/\pi^2$:
\begin{equation}\label{pi2-1920}
\sum_{n=0}^{\infty} \frac{\left(\frac12\right)_n\left(\frac18\right)_n\left(\frac38\right)_n\left(\frac58\right)_n\left(\frac78\right)_n}{(1)_n^5} \frac{1}{7^{4n}}(1920n^2+304n+15)=\frac{56\sqrt{7}}{\pi^2},
\end{equation}
and
\begin{equation}\label{pi2-532}
\sum_{n=0}^{\infty} \frac{ \left( \frac12 \right)_n \left( \frac13 \right)_n \left( \frac23 \right)_n \left( \frac16 \right)_n \left( \frac56 \right)_n}{(1)_n^5} \left( \frac35 \right)^{6n} (532n^2+126n+9)=\frac{375}{4\pi^2}.
\end{equation}
Formula (\ref{pi2-1920}) was conjectured in \cite{Gui}, and formula (\ref{pi2-532}) is joint with G. Almkvist, and was conjectured in \cite{AlGu}. The following example of formula for $1/\pi$:
\begin{equation}\label{eq-ten}
\sum_{n=0}^{\infty} \frac{\left(\frac{1}{10} \right)_n \left( \frac{3}{10} \right)_n \left( \frac{7}{10} \right)_n \left( \frac{9}{10} \right)_n}{\left(\frac12 \right)_n(1)_n^3} \frac{1}{2^{6n}} \frac{2100n^2+1160n+63}{2n+1} = \frac{200}{\pi},
\end{equation}
is such that there is no known Orr-type factorization from which we can derive it.
\par As an example of application of the PSLQ algorithm, we explain how we have discovered the formula (\ref{for1-ex-2}): Let
\[
t(j)=\sum_{n=0}^{\infty}\frac{\left(\frac18\right)_n\left(\frac38\right)_n\left(\frac58\right)_n\left(\frac78\right)_n}{\left(\frac12\right)_n(1)_n^3}\left( \frac{-2^{14}}{23^4} \right) \frac{n^j}{2n+1}.
\]
Then, looking for integer relations among the quantities
\[
\frac{1}{\pi^2}, \quad  t(0)^2, \quad t(1)^2, \quad t(2)^2, \quad t(0)t(1), \quad t(0)t(2), \quad t(1)t(2),
\]
we guess that
\begin{align*}
\frac{6436343}{\pi^2} = & 705600 \, t(0)^2 + 146676321 \, t(1)^2 + 437228100 \, t(2)^2 \\ & + 20346480 \, t(0)t(1) + 35128800 \, t(0)t(2) + 506482020 \, t(1)t(2),
\end{align*}
and taking the square root, we have
\[
\frac{529\sqrt{23}}{\pi}=3(6970 \, t(2) + 4037 \, t(1) + 280 \, t(0)).
\]
This is interesting because as we have explained before, when we guess a formula corresponding to complementary arguments of the elliptic integrals of an Orr-type factorization, then it is simple to prove.

\section*{Acknowledgement}

I thank Wadim Zudilin for encouraging me to extend these investigations with the use of modular equations in alternative bases.

\section{Addendum}

From (\ref{orr-ex-2}), we can arrive at
\begin{align}
\sum_{n=0}^{\infty} \frac{\left(\frac18\right)_n \left(\frac38\right)_n\left(\frac58 \right)_n\left(\frac78\right)_n}{\left(\frac12\right)_n(1)_n^3} \! \left( \! \frac{-4x^2}{(x^2-1)^2} \! \right)^n &= 
\frac{4}{\pi^2} \frac{\sqrt[4]{1-x}}{\sqrt{1+h(x)}} K \! \!  \left(\sqrt{g(x)}\right) K \! \! \left(\sqrt{\frac{2h(x)}{1+h(x)}} \right),
\nonumber \\ \label{last-orr} \text{where} \quad g(x) &= \frac12-\frac12 \sqrt{1-x}, \quad \text{and} \quad h(x)=\sqrt{\frac{x}{x+1}}. 
\end{align}
From (\ref{last-orr}), and in the same way used to derive the previous formulas in this paper, we can prove the following ``divergent" series for $1/\pi$:
\begin{align}
\sum_{n=0}^{\infty} \frac{\left(\frac18\right)_n \left(\frac38\right)_n\left(\frac58 \right)_n\left(\frac78\right)_n}{\left(\frac12\right)_n(1)_n^3} \left( \frac{-2^{14}}{7^4} \right)^n (-600-7518n-24576n^2-18785n^3) & \, ``\!=\!" \, \frac{98\sqrt{7}}{\pi}, \nonumber \\
\sum_{n=0}^{\infty} \frac{\left(\frac18\right)_n \left(\frac38\right)_n\left(\frac58 \right)_n\left(\frac78\right)_n}{\left(\frac12\right)_n(1)_n^3} \left( \frac{-2^{14}}{7^4}  \right)^n \frac{120+1273n+2210n^2}{2n+1} & \, `` \!=\!" \, \frac{49\sqrt{7}}{\pi}.
\end{align}
Since these last two formulas are ``divergent" their ``upside-down" are convergent, and inspired by \cite{GuiRog}, we have guessed that
\begin{align}
\sum_{n=1}^{\infty} \frac{\left(\frac12\right)_n(1)_n^3}{\left(\frac18\right)_n \left(\frac38\right)_n\left(\frac58 \right)_n\left(\frac78\right)_n} \left( \frac{-7^4}{2^{14}} \right)^n \frac{-600+7518n-24576n^2+18785n^3}{7^4 n^3} & \, {\overset{?} =} \, 2 \, {\rm L}_{-7}(2) \! - \! 1, \nonumber \\
\sum_{n=1}^{\infty} \frac{\left(\frac12\right)_n(1)_n^3}{\left(\frac18\right)_n \left(\frac38\right)_n\left(\frac58 \right)_n\left(\frac78\right)_n} \left( \frac{-7^4}{2^{14}}  \right)^n \frac{120-1273n+2210n^2}{7^4 (-2n+1) n^3} & \, {\overset{?} =} \, {\rm L}_{-7}(2).
\end{align}
A geometric interpretation (as a volume) of the arithmetical constant ${\rm L}_{-7}(2)$ is given in \cite{BBBZ}.

\end{document}